\documentclass[sn-mathphys-num]{sn-jnl}


\usepackage{graphicx}
\usepackage{amsmath,amssymb,amsthm}

\usepackage{lmodern}
\theoremstyle{plain}

\newtheorem{theorem}{Theorem}
\newtheorem{proposition}{Proposition}
\newtheorem{lemma}{Lemma}
\newtheorem{corollary}{Corollary}


\newcommand{\T}{\mathbf T}
\newcommand{\Tss}{\mathbf{T}_{ss}}

\newcommand{\Xt}{\mathbf X_t}
\newcommand{\X}{\mathbf X}
\newcommand{\Xs}{\mathbf X_s}
\newcommand{\Xss}{\mathbf X_{ss}}
\newcommand{\Tt}{\mathbf T_t}

\newcommand{\N}{\mathbb N}
\newcommand{\Z}{{\mathbb Z}}
\newcommand{\II}{{\mathcal I}}

\begin{document}
	
	\title[]{Regular Polygonal Vortex Filament Evolution and Exponential Sums}
	
	
	\author[1]{\fnm{Fernando} \sur{Chamizo}}\email{fernando.chamizo@uam.es}
	
	\author*[2]{\fnm{Francisco} \sur{de la Hoz}}\email{francisco.delahoz@ehu.eus}
	
	\affil[1]{\orgdiv{Departamento de Matem\'aticas and ICMAT}, \orgname{Universidad Aut\'onoma de Madrid}, \orgaddress{\street{Ciudad Universitaria de Cantoblanco}, \city{Madrid}, \postcode{28049}, \country{Spain}}}
	
	\affil*[2]{\orgdiv{Department of Mathematics}, \orgname{University of the Basque Country UPV/EHU}, \orgaddress{\street{Barrio Sarriena S/N}, \city{Leioa}, \postcode{48940}, \country{Spain}}}
		
	
	\abstract{In this paper, we give a rigorous proof for the expression of the angle between adjacent sides in the skew polygons appearing at rational times in the evolution of regular polygons of $M$ sides under the vortex filament equation. 
	The proof depends on showing that some exponential sums with arithmetic content are purely imaginary.}
	
	\keywords{Vortex filament equation, nonlinear Schr\"odinger equation, rotation matrices, trigonometric sums}
		
	\pacs[MSC]{35Q35, 35Q41, 11L03, 11R52}
	
	\maketitle

\section{Introduction}

The binormal flow,
\begin{equation*}
\Xt = \kappa\mathbf b,
\end{equation*}
where $t$ is the time, $\kappa$ is the curvature, and $\mathbf b$ is the binormal component of the Frenet-Serret formulas, appeared for the first time in 1906 in \cite{darios} as an approximation of the dynamics of a vortex filament under the Euler equations, and is equivalent to
\begin{equation}
	\label{e:xt}\Xt = \Xs\wedge\Xss,
\end{equation}
where $\wedge$ is the usual cross-product, and $s$ is the arc-length parameter. The length of the tangent vector $\T = \X_s$ remains constant, so we can assume, without loss of generality, that $\|\T\| = 1$, where $\|\cdot\|$ denotes the Euclidean norm. Differentiating \eqref{e:xt} with respect to $s$, we get the Schr\"odinger map onto the sphere:
\begin{equation}
	\label{e:schmap}\Tt = \T\wedge\Tss.
\end{equation}
The evolution of \eqref{e:xt}-\eqref{e:schmap} for $\X(s, 0)$ having corners has received considerable attention in the last years, and especially relevant is the case when the image of $\X(s, 0)$ is a planar regular polygon $\mathcal{P}$ of $M$ sides, which we consider here. In what follows, we recall those ideas that are needed in this paper; for a complete description of the problem, we refer to \cite{HozVega2014,HozVega2014b,HozVega2018}.

Since \eqref{e:xt} and \eqref{e:schmap} are invariant with respect to rotations, we can suppose that  $\X(s, 0)$ and $\T(s, 0)$ live in the plane $XY$, which we identify with $\mathbb C$. 
As we have assumed that $\X(s, 0)$ is parameterized by arc length, imposing that $\mathcal{P}$ has length $2\pi$, we can consider 
\begin{equation}\label{e:init}
 \X(\cdot, 0):[0,2\pi)\longrightarrow\mathcal{P}
 \qquad\text{linear on }I_k=\Big(\frac{2\pi k}{M},\frac{2\pi (k+1)}{M}\Big),\quad 0\le k<M,
\end{equation}
as a $2\pi$-periodically extendable function such that the image of  
of $I_k$ is the $k$-side of $\mathcal{P}$. After a rotation, we can also assume that this side is parallel to  $e^{2\pi ik/M}$, equivalently, 
$\T(s,0)=e^{2\pi ik/M}$, when $s\in I_k$. 
\medskip

The main result in 
\cite{HozVega2014}
states that, when time evolves, for $t=t_{p/q}=\frac{2\pi p}{qM^2}$
with $p/q$ an irreducible fraction (with $q>0$), 
$\X(\cdot, t_{p/q})$ describes a skew (in general, nonplanar) polygon of $Mq$ sides, if $q$ is odd, and of $Mq/2$ sides, if $q$ is even. This polygon has sides of the same length and all the angles between adjacent sides are equal. In
\cite{HozVega2014}, a conjecture is posed on the magnitude of these angles.
We prove here this conjecture with an argument involving a variation of the Gaussian sums. 

\

\begin{theorem}\label{t:main}
Let $\rho$ be the angle between two adjacent sides of the skew polygon given by $\X(s, t_{p/q})$, where $s$ varies. Then, we have
\begin{equation}
	\label{e:cosr2}
	\cos^q\left(\frac\rho2\right) =
	\cos\left(\frac{\pi}{M}\right)\text{ if $q$ is odd},
	\quad\text{and}\quad
	\cos^q\left(\frac\rho2\right) =
	\cos^{2}\left(\frac{\pi}{M}\right)\text{ if $q$ is even}.
\end{equation}
\end{theorem}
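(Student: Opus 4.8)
The starting point is the explicit description of the limit polygon at rational times. By the algebraic analysis of the Schr\"odinger map \eqref{e:schmap} for polygonal data, $\mathbf T(\cdot,t_{p/q})$ is piecewise constant, taking finitely many values $\mathbf T_j$ that are obtained from the planar initial data $\mathbf T(s,0)=e^{2\pi ik/M}$ through a transformation whose coefficients are generalized quadratic Gauss sums $\sum_m e^{2\pi i(am^2+bm)/q}$. Since all the angles of the skew polygon coincide, it suffices to compute one inner product $\cos\rho=\mathbf T_j\cdot\mathbf T_{j+1}$. I would phrase everything through the half-angle, because $\cos(\rho/2)$ is the natural quantity in the spin ($SU(2)$) lift of the frame, where a rotation of $\mathbf T$ by an angle $\theta$ about any axis lifts to a matrix of trace $2\cos(\theta/2)$. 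The combinatorial key is that the fine corners at $t_{p/q}$ sit on a lattice $q$ (resp. $q/2$) times finer than that of $\mathcal P$ when $q$ is odd (resp. even), which is exactly the source of the two side counts $Mq$ and $Mq/2$.

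Then I would translate the passage across one cell of the original lattice into an ordered product $P=U_n\cdots U_1$ of $n$ elementary $SU(2)$ transition matrices, with $n=q$ for $q$ odd and $n=q/2$ for $q$ even, each factor carrying trace $2\cos(\rho/2)$ but its own phase $e^{i\xi_j}$ in the off-diagonal entries. The rational-time self-similarity identifies $P$, up to conjugation by a fixed frame, with the single jump of $\mathcal P$, namely the rotation by the exterior angle $2\pi/M$, so that $P$ has the prescribed real trace $2\cos(\pi/M)$. Expanding the product and collecting terms, the contribution of the $n$ diagonal entries is the principal real term $\cos^n(\rho/2)$, while every remaining term carries at least two off-diagonal factors and hence a nontrivial combination of the phases $\xi_j$, quadratic in $j$; these combinations assemble into a variation of a quadratic Gauss sum.

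The heart of the argument is the arithmetic claim that this correction is purely imaginary, so that matching the real part of the identity for $P$ yields $\cos^n(\rho/2)=\cos(\pi/M)$, that is, $\cos^q(\rho/2)=\cos(\pi/M)$ for $q$ odd and $\cos^q(\rho/2)=\cos^2(\pi/M)$ for $q$ even. To prove it I would use a reflection $j\mapsto -j$ in the summation which, together with the sign produced by the odd number of off-diagonal factors, sends the correction to minus its own conjugate and so forces $\re$ to vanish; making this precise requires the exact evaluation of the underlying Gauss sums by reciprocity, whose value depends on $q\bmod 4$ and on the parity of $q$. I expect this purely-imaginary property to be the main obstacle, the even case being the more delicate one, since there the collapse from $Mq$ to $Mq/2$ sides must be reconciled with the bookkeeping of the $q/2$ factors and with the corresponding parity behaviour of the Gauss sums.
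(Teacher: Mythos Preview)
Your overall architecture coincides with the paper's: lift the product of $q$ (resp.\ $q/2$) corner rotations to $SU(2)$, match its trace with $2\cos(\pi/M)$, expand to isolate the leading term $\cos^n(\rho/2)$, and reduce the vanishing of all lower-order terms to the claim that a quadratic exponential sum is purely imaginary. You also correctly locate this last claim as the heart of the matter.

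The gap is in the mechanism you propose for that claim. First, only \emph{even} numbers of off-diagonal factors contribute to the trace (an odd product of the anti-diagonal matrices $\vec v_j\cdot\vec\sigma$ is traceless), so there is no ``sign produced by the odd number of off-diagonal factors'' to invoke. Second, and more substantively, the reflection $n_j\mapsto -n_j\pmod q$ on the index set $\mathcal I_{2k}^q=\{0\le n_1<\cdots<n_{2k}<q\}$, followed by the reordering needed to land back in $\mathcal I_{2k}^q$, sends the alternating form $Q_k(\vec n)=\sum_j(-1)^{j+1}n_j^2$ to $-Q_k(\vec n)$ on the part with $n_1\ge 1$ but to $+Q_k(\vec n)$ on the part with $n_1=0$. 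So the reflection shows the first sub-sum is \emph{real}, with no constraint on the second; this is the wrong sign and does not force $\Re\mathcal E_k=0$. A check at $q=3$, $k=1$, $a=1$ already exhibits the problem: $\mathcal E_1=1+2e^{4\pi i/3}=-i\sqrt 3$, but the reflection fixes $(1,2)$ (real contribution $1$) and swaps $(0,1)\leftrightarrow(0,2)$ without conjugating anything.

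The paper's device is not reflection but \emph{translation averaging}. For each $h$, the map $\vec n\mapsto \vec n+h\vec 1\pmod q$ composed with a cyclic reordering is a bijection of $\mathcal I_{2k}^q$; the cyclic permutation can only change the \emph{sign} of the alternating form $Q_k$, hence leaves $\Re\,e^{2\pi i aQ_k/q}$ invariant, while the translation produces the linear shift $Q_k(\vec n+h\vec 1)=Q_k(\vec n)+2hL_k(\vec n)$ with $L_k(\vec n)=\sum_j(-1)^j n_j$. Averaging over $h=0,\dots,q-1$ and using $0<L_k(\vec n)<q$ (pair consecutive coordinates) makes the inner geometric sum vanish, giving $q\,\Re\mathcal E_k=0$. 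For $q$ even the same idea runs modulo $q'=q/2$ after the substitution $n_j\mapsto 2n_j+\epsilon$ that absorbs the parity condition $4\nmid 2n_j+2-q$; the key extra observation is that $x^2+\epsilon x$ is well defined modulo $q$ for $x$ taken modulo $q'$. Finally, the paper \emph{avoids} the explicit reciprocity evaluation of the Gauss sums you anticipate needing: a completion of the square shows $\theta_n\equiv \tfrac{2\pi a}{q}\big(\tfrac{n}{2-\delta}\big)^2+b\pmod{2\pi}$ for some $a$ coprime to $q$ and some constant $b$, which is all that is required to pass from the trigonometric sum to $\mathcal E_k$.
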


{\it Remark.}
To avoid any confusion, we want to make clear that the uniqueness of the nonlinear equation \eqref{e:xt} under the nonsmooth initial condition \eqref{e:init} is unclear (see \cite{BanicaVega2024a} for some recent developments in this regard). Here, we are considering the tentative distributional solution introduced in \cite{HozVega2014}. There, an expression for $\X(s, t_{p/q})$ is deduced as a realization of the Talbot effect and the Galilean invariance of the nonlinear Schr\"odinger equation. There is not an analogue for the irrational time case, in which some fractal like structures show up (note the resemblance to the linear setting in \cite{BeKl}).
\medskip

To illustrate the shape of the polygons we are interested in, we have taken $M = 5$ and considered the evolution of a regular planar pentagon. In this case, $\T$ is periodic in time with period $2\pi/M^2 = 2\pi/25$, so we have plotted, on the left-hand side of Figure~\ref{f:pentagon}, $\X(s, t)$ at four times: $t = 0$ (initial datum), $t = t_{1/3} = 2\pi/75$ (one third of the time period), $t = t_{2/3} = 4\pi/75$ (two thirds of the time period) and $t = t_1 = 2\pi/25$ (one time period). Since the center of mass moves upward with constant speed, we recover at $t = 2\pi/25$ the regular planar pentagon, but lifted. On the other hand, $t = 2\pi/75$ and $t = 4\pi/75$ correspond to $q = 3$, so we have exactly $Mq = 15$ sides at those times; this can also be appreciated on the right-hand side of Figure~\ref{f:pentagon}, where we have plotted the exact values of the components of $\T(s, t)$ at time $t_{1/3} = 2\pi/75$, which are piecewise discontinuous, and consist each one of $15$ segments. Let also mention that the angle between any two adjacent sides of the $15$-sided skew polygons at $t = 2\pi/75$ and $t = 4\pi/75$ is, from \eqref{e:cosr2}, $\rho = 2\arccos(\cos^{1/3}(\pi/5)) = 0.74295\ldots$
\begin{figure}[!htbp]
	\label{f:pentagon}
	\includegraphics[width=0.5\textwidth,height=0.4\textwidth]{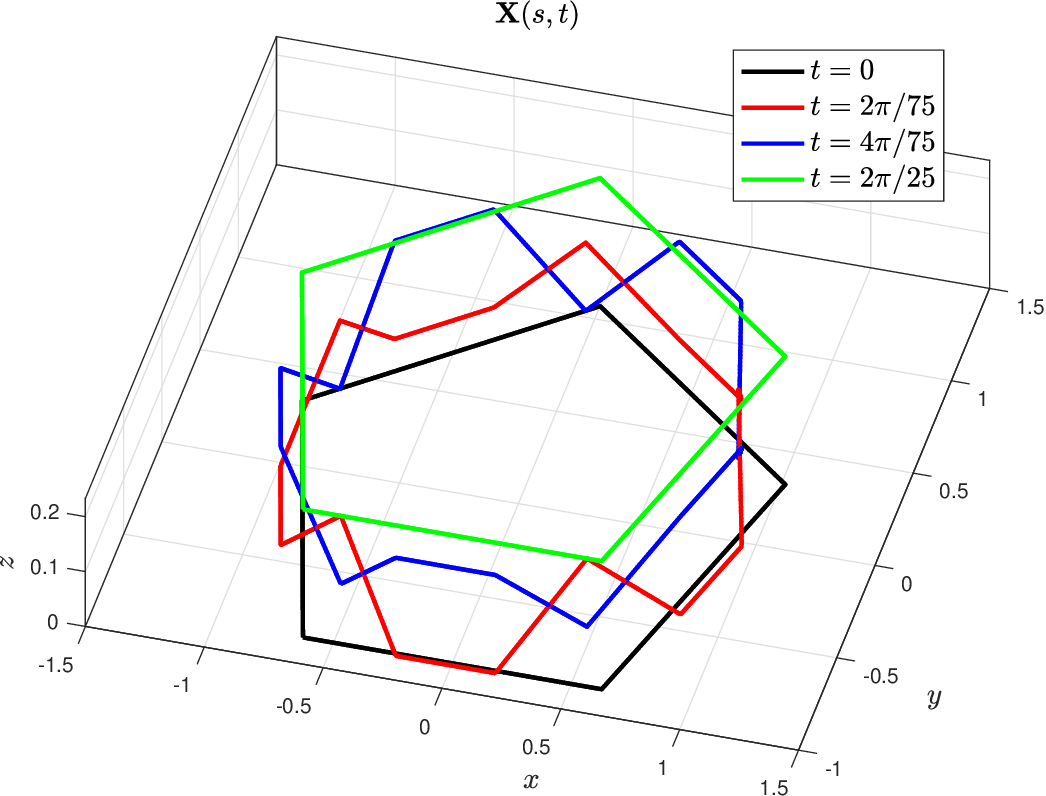}\includegraphics[width=0.5\textwidth,height=0.4\textwidth]{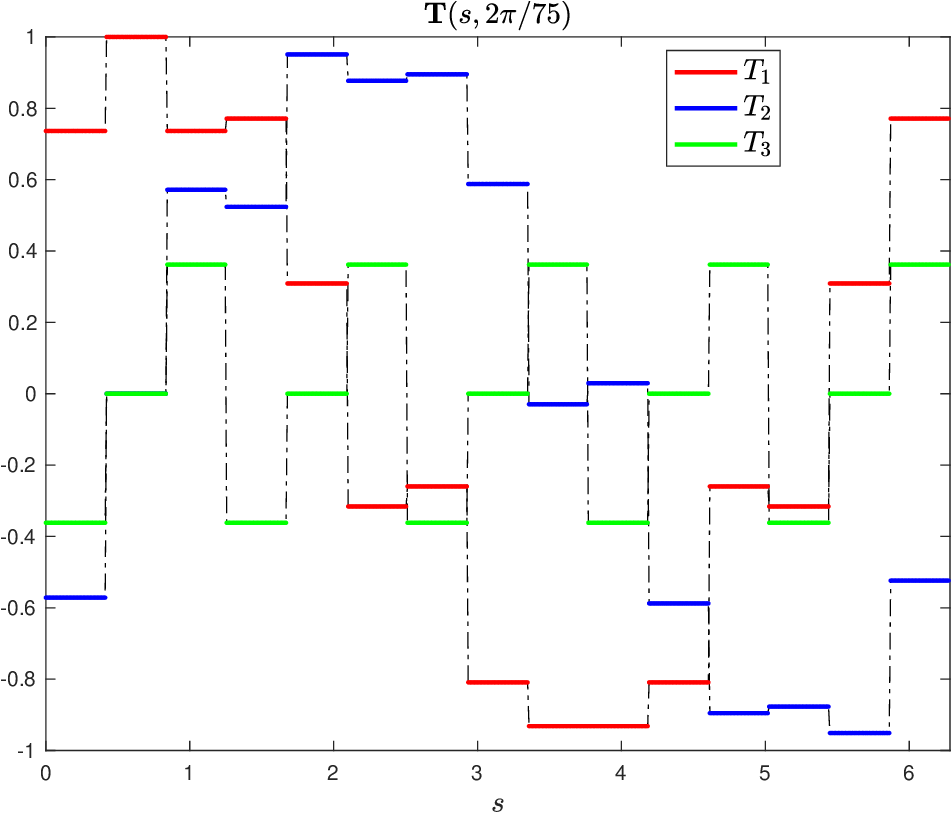}
	\caption{Evolution of a pentagon. Left: $\X(s, t)$ at $t = 0$, $t = t_{1/3} = 2\pi/75$, $t = t_{2/3} = 4\pi/75$ and $t = t_1 = 2\pi/25$. Right: $\T(s, t)$ at $t = t_{1/3} = 2\pi/25$.}
\end{figure}

With the tools explained in \cite{HozVega2014}, Theorem~\ref{t:main} can be translated into Theorem~\ref{t:rotation} below, which has a geometric statement and also an arithmetic flavor relying on the Gauss sums. 

Before stating this geometric result, we need to introduce some notation. 

The expression $R(\vec{v},\alpha)$ denotes the rotation in $\mathbb{R}^3$ of axis $\vec{v}$, assumed to be a unit vector, and angle $\alpha$.  As we have mentioned above, we identify complex numbers and vectors in $\{z=0\}$; then, $R\big(e^{i\theta},\alpha\big)$
is an abbreviation for 
$R\big((\cos\theta, \sin\theta,0),\alpha\big)$.
We employ the usual notation for divisibility in number theory: $a\mid b$ means $b/a\in\Z$, and $a\nmid b$ indicates the absence of divisibility.
Finally, $G(-p,n,q)$, with $p$ and $q$ coprime as before and $n\in\mathbb{Z}$, denotes the generalized Gauss sum 
\[
 G(-p,n,q)
 :=
 \sum_{k=0}^{q-1}
 \exp\Big(
 \frac{-pk^2+nk}{q}
 \Big).
\]

Now we are ready to state the geometric result from which Theorem~\ref{t:main} is deduced.

\begin{theorem}\label{t:rotation}
 Let $\theta_n$ be the argument of the generalized Gauss sum $G(-p,n,q)$, and define the rotation $\mathcal R$ as 
 \[
\mathcal R = 
\prod_{\substack{n=0 \\ 4\nmid q-2n}}^{q-1}
R\big(e^{i\theta_{q-1-n}},\rho\big).
\]
 Then, $\mathcal R$ is a rotation of angle $2\pi/M$, if and only if \eqref{e:cosr2} holds. 
\end{theorem}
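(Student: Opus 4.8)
The plan is to lift the whole statement to the double cover $SU(2)$, where rotations become unit quaternions and the rotation angle is read off from a single trace. A rotation $R(e^{i\theta},\rho)$ about the planar axis $(\cos\theta,\sin\theta,0)$ by angle $\rho$ is represented, up to sign, by
\[
U(\theta)=\cos\tfrac\rho2\,\mathbf I-i\sin\tfrac\rho2\begin{pmatrix}0 & e^{-i\theta}\\ e^{i\theta} & 0\end{pmatrix}
=\begin{pmatrix}\cos\frac\rho2 & -i\sin\frac\rho2\,e^{-i\theta}\\[2pt] -i\sin\frac\rho2\,e^{i\theta} & \cos\frac\rho2\end{pmatrix}.
\]
Writing $c=\cos(\rho/2)$, $s=\sin(\rho/2)$ and letting $Q$ be the ordered product of the matrices $U(\theta_{q-1-n})$ attached to $\mathcal R$, the rotation angle $\alpha$ of $\mathcal R$ satisfies $2\cos(\alpha/2)=|\mathrm{tr}\,Q|$. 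Since $\cos(\pi/M)>0$ and $2\pi/M\le\pi$, the statement ``$\mathcal R$ has angle $2\pi/M$'' is equivalent to $\tfrac12\mathrm{tr}\,Q=\cos(\pi/M)$, so everything reduces to computing $\mathrm{tr}\,Q$ as a function of $\rho$.

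Next I would pin down the axes. For $q$ odd, completing the square in $G(-p,n,q)$ gives $G(-p,n,q)=e^{2\pi i\,\overline{4p}\,n^2/q}\,G(-p,0,q)$, where $\overline{4p}$ is the inverse of $4p$ modulo $q$; hence $\theta_n=\theta_0+2\pi\,\overline{4p}\,n^2/q$. Because $U(\theta_0+\psi)=V(\theta_0)\,U(\psi)\,V(\theta_0)^{-1}$ with $V(\theta)=\mathrm{diag}(e^{-i\theta/2},e^{i\theta/2})$, the constant phase $\theta_0$ drops out under the trace, and I am left with $\mathrm{tr}\,Q=\mathrm{tr}\prod_{m=q-1}^{0}U(\psi_m)$, where $\psi_m=2\pi a m^2/q$ and $a=\overline{4p}$. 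This is the point at which the arithmetic becomes purely quadratic.

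Expanding the product through $U(\psi)=c\,\mathbf I-is\,A(\psi)$ and using that a product of an even number $2\ell$ of the off-diagonal matrices $A$ is diagonal with entry $\exp\!\big(i\sum_{j}(-1)^j\psi_{m_j}\big)$, one obtains
\[
\mathrm{tr}\,Q=2c^{q}+\sum_{\ell\ge1}2(-1)^\ell s^{2\ell}c^{\,q-2\ell}\,\re T_\ell,
\qquad
T_\ell=\sum_{q-1\ge m_1>\cdots>m_{2\ell}\ge0}\exp\!\Big(\tfrac{2\pi i a}{q}\sum_{j=1}^{2\ell}(-1)^j m_j^2\Big).
\]
The leading term is exactly $2\cos^q(\rho/2)$, so the proof comes down to the Lemma that \emph{each correction sum $T_\ell$ is purely imaginary}, i.e.\ $\re T_\ell=0$; granting this, $\mathrm{tr}\,Q=2\cos^q(\rho/2)$ identically in $\rho$, whence $\cos(\alpha/2)=\cos^q(\rho/2)$ and the angle equals $2\pi/M$ iff \eqref{e:cosr2} holds. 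This purely-imaginary statement is precisely the ``exponential sums with arithmetic content'' of the abstract, and it is the main obstacle: the natural reflect-and-reverse involution $m_j\mapsto q-1-m_{2\ell+1-j}$ sends the phase to its negative plus a residual linear term $\tfrac{2\pi a}{q}\sum_j(-1)^j m_j$, so the vanishing of $\re T_\ell$ is not a naive pairwise cancellation (one checks directly, e.g.\ $q=3,p=1$, that $T_1=-i\sqrt3$) but must instead be extracted from the quadratic structure via a Gauss-sum/reciprocity argument controlling that residual factor.

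Finally I would treat even $q$ in parallel. Here the condition $4\nmid q-2n$ selects exactly the $q/2$ indices for which $G(-p,n,q)\neq0$, so that $\theta_n$ is defined; an analogous but more delicate evaluation of the generalized Gauss sum (distinguishing $q\equiv2$ and $q\equiv0\pmod4$) supplies the corresponding quadratic phases over these $q/2$ factors. The same trace expansion then has leading term $2\cos^{q/2}(\rho/2)$, and once the analogue of the purely-imaginary Lemma is established the corrections drop out, giving $\cos(\alpha/2)=\cos^{q/2}(\rho/2)$; squaring shows the angle is $2\pi/M$ iff $\cos^q(\rho/2)=\cos^2(\pi/M)$, completing \eqref{e:cosr2}. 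I expect the even-$q$ Gauss-sum bookkeeping and the verification of the purely-imaginary Lemma in both parities to absorb essentially all the real work.
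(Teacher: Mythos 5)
Your reduction is correct and is, up to notation, the paper's own: the lift to $\mathrm{SU}(2)$, the trace criterion for the rotation angle, the elimination of the common phase $\theta_0$, and the expansion of $\mathrm{tr}\,Q$ into powers of $\cos(\rho/2)$ with coefficients $\Re T_\ell$, where the $T_\ell$ are quadratic exponential sums obtained by completing the square in $G(-p,n,q)$, reproduce \S\ref{s:trig}--\S\ref{s:quad} of the paper (Lemma~\ref{l:trace}, Lemma~\ref{l:th2quad}, Corollary~\ref{c:realE}). The problem is that the statement you isolate as ``the Lemma'' --- that $\Re T_\ell=0$ for all $0<2\ell\le q$ --- is exactly Proposition~\ref{p:Tk0} (via Corollary~\ref{c:realE}), i.e.\ the arithmetic heart of the paper (the abstract says the proof ``depends on showing that some exponential sums with arithmetic content are purely imaginary''), and you leave it unproved: you observe, correctly, that the reflect-and-reverse involution fails because of a residual linear term, and then appeal to an unspecified ``Gauss-sum/reciprocity argument.'' That is a genuine gap, and the guessed mechanism is not the one that works. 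The paper's proof in \S\ref{s:vrp} is an elementary averaging argument with no reciprocity and no evaluation of Gauss sums: for $q$ odd, one uses the bijection $f_h$ sending $\vec n$ to $\vec n+h\vec 1$ reduced modulo $q$ and cyclically reordered. A cyclic reordering at most flips the sign of the quadratic form $Q_\ell$, so the real part of each term is invariant under $f_h$; writing $Q_\ell(\vec n+h\vec 1)=Q_\ell(\vec n)+2hL_\ell(\vec n)$, with $L_\ell$ the alternating linear form, and averaging over $h=0,\dots,q-1$, the inner sum $\sum_h\exp\big(4\pi i a hL_\ell(\vec n)/q\big)$ vanishes by \eqref{e:runity}, because $0<L_\ell(\vec n)<q$ (the strictly increasing coordinates pair into positive gaps whose total is less than $q$) and $\gcd(2a,q)=1$.

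The even-$q$ half is deferred in the same way (``I expect the even-$q$ Gauss-sum bookkeeping\ldots to absorb essentially all the real work''), so it, too, is missing; and again the paper avoids any evaluation of the generalized Gauss sums there: the condition $4\nmid q-2n$ forces all surviving indices to share the parity $\epsilon$ of $q/2$, the substitution $n_j\mapsto 2n_j+\epsilon$ rewrites the sum over $\mathcal{I}_{2\ell}^{q/2}$ with phase $Q_\ell(\vec n)+\epsilon L_\ell(\vec n)$, well defined modulo $q$ because $x^2+\epsilon x \pmod q$ depends only on $x \pmod{q/2}$, and the same translation-averaging modulo $q/2$ finishes the argument. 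Your numerical check $T_1=-i\sqrt 3$ for $q=3$, $p=1$ is correct and consistent with all of this, but as it stands your argument establishes only the reduction (the easy part) and assumes precisely the statement that constitutes the theorem's real content.
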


{\it Remarks.}
The condition $4\nmid q-2n$ avoids exactly the cases in which the argument is undefined, because the Gauss sums vanish (see for instance  \cite{ChSa} for an elementary proof).
Note that, for $q$ odd, this condition is always fulfilled. In this noncommutative situation, it is important to respect the standard ordering of the product in the statement. For instance, for $q$ odd, it is 
$
R\big(e^{i\theta_{q-1}},\rho\big)
R\big(e^{i\theta_{q-2}},\rho\big)
\cdots
R\big(e^{i\theta_{0}},\rho\big)
$.

\medskip

Our approach to prove Theorem~\ref{t:rotation} is as follows. In \S\ref{s:trig}, the result is deduced assuming that a certain trigonometric sum involving $\theta_n$ vanishes; this only requires basic manipulations involving Pauli matrices. In 
\S\ref{s:quad}, using modular arithmetic and the quadratic nature of the Gauss sums, the trigonometric sum is related to the real part of a quadratic exponential sum. Finally, in 
\S\ref{s:vrp}, with more involved arithmetic arguments, it is shown that the quadratic exponential sum is purely imaginary.

\section{The trigonometric sum formulation}\label{s:trig}

Given $k,N\in\Z^+$, consider the set 
\[
 \mathcal{I}_k^N
 =
 \big\{
 \vec{n}=(n_1,\dots, n_k)\in\mathbb{Z}^k
 \,:\,
 0\le n_1<n_2<\cdots<n_k<N
 \big\}.
\]
With the same notation as in Theorem~\ref{t:rotation} we introduce the trigonometric sum 
\[
 \mathcal{T}_k
 =
\sum_{\substack{\vec{n}\in \mathcal{I}_{2k}^q \\ 4\nmid 2n_j+2-q}}
   \cos\big(\theta_{n_1}-\theta_{n_2}+\cdots -\theta_{n_{2k}}\big),
\]
where the divisibility condition $4\nmid 2n_j+2-q$ reflects that of Theorem~\ref{t:rotation} after the map $n\mapsto q-1-n$, which changes $\theta_{q-1-n}$ into $\theta_n$.   
Again, note that this condition only imposes an actual restriction for $q$ even.
In the next sections, we will show the following proposition.

\medskip

\begin{proposition}\label{p:Tk0}
 For any $0<2k\le q$, the sum $\mathcal{T}_k$ vanishes.
\end{proposition}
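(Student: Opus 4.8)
The plan is to convert the trigonometric sum into the real part of a single quadratic exponential sum and then prove that this sum is purely imaginary. First I would use the explicit evaluation of the Gauss sum $G(-p,n,q)$. When $q$ is odd, $\gcd(2p,q)=1$, so completing the square in $-pk^2+nk$ gives $G(-p,n,q)=e^{2\pi i\,\nu n^2/q}\,G(-p,0,q)$ with $\nu\equiv\overline{4p}\pmod q$; since $|G(-p,0,q)|=\sqrt q$, this yields $e^{i\theta_n}=e^{i\theta_0}e^{2\pi i\,\nu n^2/q}$, i.e.\ $\theta_n\equiv\theta_0+2\pi\nu n^2/q\pmod{2\pi}$. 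When $q$ is even the same evaluation applies on the residues allowed by $4\nmid q-2n$ (the excluded ones being exactly those for which $G$ vanishes and $\theta_n$ is undefined), now with modulus $\sqrt{2q}$ and an argument that is again quadratic in $n$ up to a fixed phase.

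The decisive simplification is that the sign pattern in $\theta_{n_1}-\theta_{n_2}+\cdots-\theta_{n_{2k}}$ alternates over an even number $2k$ of indices, so the constant phase $\theta_0$ cancels completely and only the quadratic part survives:
\[
\theta_{n_1}-\theta_{n_2}+\cdots-\theta_{n_{2k}}\equiv\frac{2\pi\nu}{q}\,Q(\vec n),\qquad Q(\vec n)=\sum_{j=1}^{2k}(-1)^{j+1}n_j^2.
\]
Hence $\mathcal T_k=\re S$, where
\[
S=\sum_{\substack{\vec n\in\mathcal{I}_{2k}^q\\ 4\nmid 2n_j+2-q}}\exp\Big(\frac{2\pi i\nu}{q}\,Q(\vec n)\Big),
\]
and it remains to prove $\re S=0$. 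This is the reduction I would carry out in \S\ref{s:quad}.

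For the purely imaginary claim of \S\ref{s:vrp}, the natural tool is the involution $\iota\colon n\mapsto(-n)\bmod q$, which fixes each square $n^2$ modulo $q$ and respects the divisibility condition, and which I would let act on the underlying $2k$-element set. On tuples that avoid the fixed points $0$ and (for even $q$) $q/2$, the map $\iota$ reverses the induced ordering, so $Q(\iota\vec n)\equiv-Q(\vec n)\pmod q$ and the corresponding summand is conjugated; thus the contribution of these generic tuples to $S$ is real. The point is that this alone does not give $\re S=0$: the tuples meeting $\{0,q/2\}$ behave differently under $\iota$ (the reversal is broken at the fixed point, and $Q$ is no longer simply negated), so one must isolate these boundary contributions and show that their real parts cancel the generic real part.

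The main obstacle is precisely this last cancellation. Because $q$ can be odd, there is no half-period shift available to turn $e^{2\pi i\nu Q/q}$ into its negative, so $\re S=0$ cannot follow from a single sign-reversing involution; it is a genuine arithmetic identity. I expect to establish it either by an inductive reduction, in which the boundary sum (an alternating quadratic sum over the $2k-1$ indices $n_2,\dots,n_{2k}$) is related back to lower-order instances of the same statement, or by exploiting the multiplicativity of Gauss sums under the Chinese Remainder Theorem to factor $S$ over the prime-power components of $q$ and evaluate it there. The even-$q$ case is the hardest part of this step: the vanishing of half the Gauss sums, the modulus $\sqrt{2q}$, and the failure of naive completion of the square force one to treat the $2$-adic and odd parts of $q$ separately.
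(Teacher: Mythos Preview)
Your reduction to $\mathcal T_k=\Re S$ with $S$ a quadratic exponential sum is correct and matches the paper's Lemma~\ref{l:th2quad} and Corollary~\ref{c:realE}. The gap is in the second half. The reflection $\iota:n\mapsto -n\bmod q$ is the wrong symmetry: as you yourself observe, on generic tuples it pairs a summand with its complex conjugate, so it shows that the generic contribution to $S$ is \emph{real}, which is the opposite of what you need. You are then left trying to prove that this real number is cancelled by the real part of the boundary contribution, and neither of your proposed fixes is viable. An inductive reduction on $k$ does not close, because the boundary sum over $(n_2,\dots,n_{2k})$ carries an odd number of alternating signs and is not an instance of $\mathcal T_{k-1}$. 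A CRT factorization is also not available: $S$ is a sum over \emph{ordered} tuples in $\mathcal I_{2k}^q$, a constraint that does not respect the product decomposition $\Z/q\Z\cong\prod\Z/p_i^{e_i}\Z$, so $S$ does not factor over the prime powers of $q$.

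The paper uses a translation symmetry instead of a reflection. For each $h$ one shifts $\vec n\mapsto\vec n+h\vec 1\bmod q$ and then applies the unique cyclic permutation restoring the increasing order; call the composite $f_h$. The crucial point is that a cyclic permutation of an alternating quadratic form in an even number of variables can only change its sign, so $Q_k(f_h(\vec n))\equiv\pm Q_k(\vec n+h\vec 1)\pmod q$ and hence the \emph{real part} of $\exp\big(2\pi i a Q_k(\cdot)/q\big)$ is the same at $f_h(\vec n)$ as at the unwrapped $\vec n+h\vec 1$. Averaging $\Re S$ over $h$ and expanding $Q_k(\vec n+h\vec 1)=Q_k(\vec n)+2hL_k(\vec n)$ with $L_k(\vec n)=\sum_j(-1)^j n_j$ reduces everything to a geometric sum in $h$, which vanishes because $0<L_k(\vec n)<q$ for every $\vec n\in\mathcal I_{2k}^q$. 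There are no boundary cases to chase; the argument works uniformly (with $q'=q/2$ replacing $q$ in the even case). The missing idea in your proposal is precisely this shift-and-cycle bijection and the observation that it affects $Q_k$ only through a sign.
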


\

It is well known that there is a double cover $\text{SU}(2)\longrightarrow \text{SO}(3)$, in such a way that 
\[
 S(\vec{n},\alpha)
 :=
 \cos \Big(\frac{\alpha}{2}\Big)\, I
 +\vec{n}\cdot\vec{\sigma}
 \sin \Big(\frac{\alpha}{2}\Big)
 \longmapsto 
 R(\vec{n},\alpha),
\]
where 
$\vec{n}\cdot\vec{\sigma}=n_1\sigma_1+n_2\sigma_2+n_3\sigma_3$ and $\sigma_1$, $\sigma_2$, $\sigma_3$ are the standard Pauli matrices. This means that 
\begin{equation}\label{spinors}
 R(\vec{n}_1,\alpha_1)
 R(\vec{n}_2,\alpha_2)
 =
 R(\vec{n}_3,\alpha_3)
 \quad\Longleftrightarrow\quad
 S(\vec{n}_1,\alpha_1)
 S(\vec{n}_2,\alpha_2)
 =
 \pm S(\vec{n}_3,\alpha_3).
\end{equation}
The uncertainty in the sign is related to topological topics exemplified by Dirac's belt trick. 
Usually \eqref{spinors} is proved in the context of Lie group theory but, in fact, it can be deduced using basic linear algebra \cite{chamizo}.

To prove Theorem~\ref{t:rotation} assuming Proposition~\ref{p:Tk0}, apart from \eqref{spinors}, the only fact involving Pauli matrices that we are going to use is the simple identity stated in the following auxiliary result. 
\medskip

\begin{lemma}\label{l:trace}
 Let $\varphi_0,\,\varphi_1,\,\dots, \,\varphi_{N-1}$ be the angles in the $XY$ frame of  the unit vectors
 $\vec{v}_0,\,\vec{v}_1,\,\dots, \,\vec{v}_{N-1}\in\mathbb{R}^2\times\{0\}$. Then, we have the polynomial identity
 \[
  \frac 12\,
  \text{\rm Tr}
  \prod_{n=0}^{N-1}
  \big(x I +i\vec{v}_n\cdot\vec{\sigma}\big)
  =
  \sum_{k=0}^{\lfloor N/2\rfloor}
  x^{N-2k}
  \sum_{\vec{n}\in\mathcal{I}_{2k}^N}
  \cos\big(\varphi_{n_1}-\varphi_{n_2}+\cdots -\varphi_{n_{2k}}\big),
 \]
 where $\text{\rm Tr}$ indicates the trace, and for $k=0$, the inner empty set is defined as~$1$ by convention. 
\end{lemma}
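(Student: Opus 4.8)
The plan is to prove the identity by exploiting the special structure of the matrices $x I + i\vec v_n\cdot\vec\sigma$. Since each $\vec v_n$ lies in $\mathbb R^2\times\{0\}$, we have $\vec v_n\cdot\vec\sigma = \cos\varphi_n\,\sigma_1+\sin\varphi_n\,\sigma_2$, which is a traceless $2\times 2$ Hermitian matrix with a very convenient multiplicative behaviour. Writing $A_n := \vec v_n\cdot\vec\sigma$, the key algebraic fact is that $A_n^2 = I$ (each $\vec v_n$ is a unit vector) and, more importantly, that the product of two such matrices is $A_mA_n = \cos(\varphi_m-\varphi_n)I + i\sin(\varphi_m-\varphi_n)\,\sigma_3$. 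This follows from the Pauli relations $\sigma_1\sigma_2=i\sigma_3=-\sigma_2\sigma_1$ together with $\sigma_1^2=\sigma_2^2=I$. I would record these two facts at the outset as the computational engine of the proof.

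\medskip

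Next I would expand the product $\prod_{n=0}^{N-1}(xI + iA_n)$ by distributivity. Each term in the expansion corresponds to a choice, for each index $n$, of either the scalar factor $x$ or the matrix factor $iA_n$; selecting the factors $iA_n$ at positions $n_1<n_2<\cdots<n_j$ (a subset $S$ of $\{0,\dots,N-1\}$ of size $j$) yields the term $x^{N-j}\,i^{j}\,A_{n_1}A_{n_2}\cdots A_{n_j}$, with the $A$'s kept in increasing order of index since the original product is ordered. Thus
\[
  \prod_{n=0}^{N-1}\big(xI + iA_n\big)
  =
  \sum_{j=0}^{N} x^{N-j}\, i^{j}
  \sum_{\vec n\in\mathcal I_j^N}
  A_{n_1}A_{n_2}\cdots A_{n_j}.
\]
Taking the trace, I need only control $\operatorname{Tr}(A_{n_1}\cdots A_{n_j})$. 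Using $A_mA_n = \cos(\varphi_m-\varphi_n)I + i\sin(\varphi_m-\varphi_n)\sigma_3$ repeatedly, one collapses the ordered product: each product of an even number of these matrices reduces to a combination of $I$ and $\sigma_3$, while an odd number leaves a leftover factor $A_{n}$, i.e. a combination of $\sigma_1$ and $\sigma_2$. Since $\operatorname{Tr}(I)=2$ and $\operatorname{Tr}(\sigma_1)=\operatorname{Tr}(\sigma_2)=\operatorname{Tr}(\sigma_3)=0$, only the coefficient of $I$ survives the trace, and only even $j=2k$ can contribute.

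\medskip

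The main obstacle is the bookkeeping of the collapse for a general ordered product of an even number of $A$'s: I expect the cleanest route is induction on $k$, showing that
\[
  \tfrac12\operatorname{Tr}\big(A_{n_1}A_{n_2}\cdots A_{n_{2k}}\big)
  =
  \cos\big(\varphi_{n_1}-\varphi_{n_2}+\cdots-\varphi_{n_{2k}}\big),
\]
with the alternating signs arising precisely because consecutive pairings contribute alternating differences of angles. Concretely, one can reduce a product of $2k$ matrices to a product of $2k-2$ by applying the two-matrix formula to an adjacent pair and tracking how the $\sigma_3$ terms propagate through the remaining factors; the relations $\sigma_3\sigma_1=i\sigma_2$, $\sigma_3\sigma_2=-i\sigma_1$ show that multiplying by $\sigma_3$ on the left acts as a rotation by $\pi/2$ in the $(\sigma_1,\sigma_2)$-plane, which is exactly what converts a cosine into the next cosine in the alternating chain. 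Accounting for the factor $i^{2k}=(-1)^k$ from the expansion together with the sign bookkeeping in the collapse yields the claimed cosine with the alternating argument. Finally, combining the trace computation with the expansion above and retaining only even $j=2k$ (the odd terms being traceless) produces the stated polynomial identity, with the $k=0$ term contributing $x^N\cdot\tfrac12\operatorname{Tr}(I)=x^N$, consistent with the empty-product convention.
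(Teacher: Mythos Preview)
Your overall strategy matches the paper's: expand the product by distributivity into $\sum_j x^{N-j}i^{j}\sum_{\vec n\in\mathcal I_j^N}A_{n_1}\cdots A_{n_j}$, observe that odd-length products are traceless, and evaluate $\tfrac12\operatorname{Tr}(A_{n_1}\cdots A_{n_{2k}})$. The difference lies only in this last evaluation. The paper avoids your inductive $\sigma_3$-tracking entirely by writing each $A_n=\vec v_n\cdot\vec\sigma$ as the explicit antidiagonal matrix $\begin{pmatrix}0&e^{-i\varphi_n}\\ e^{i\varphi_n}&0\end{pmatrix}$; then $A_mA_n$ is the diagonal matrix $\operatorname{diag}(e^{-i(\varphi_m-\varphi_n)},\,e^{i(\varphi_m-\varphi_n)})$, and the product of $k$ such diagonal matrices is immediate, giving $\tfrac12\operatorname{Tr}(A_{n_1}\cdots A_{n_{2k}})=\cos(\varphi_{n_1}-\varphi_{n_2}+\cdots-\varphi_{n_{2k}})$ in one line. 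Your route via $A_mA_n=\cos(\varphi_m-\varphi_n)I\pm i\sin(\varphi_m-\varphi_n)\sigma_3$ and an induction propagating the $\sigma_3$-part is correct in principle but noticeably heavier.

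One genuine loose end: your final sentence says the factor $i^{2k}=(-1)^k$ is absorbed by ``sign bookkeeping in the collapse,'' but in fact it is not. Once you establish $\tfrac12\operatorname{Tr}(A_{n_1}\cdots A_{n_{2k}})=\cos(\varphi_{n_1}-\varphi_{n_2}+\cdots)$, the coefficient of $x^{N-2k}$ in your expansion is $(-1)^k$ times that cosine, and nothing in the collapse cancels it. The paper's displayed expansion silently drops the $i^k$ as well, so the stated identity is literally off by $(-1)^k$ in the $k$th term; this is harmless for the application (Proposition~\ref{p:Tk0} kills every $k>0$ term anyway), but you should not pretend the sign disappears.
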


\begin{proof}
 Expanding the product,
 \[
  \frac 12\,
  \text{\rm Tr}
  \prod_{n=0}^{N-1}
  \big(x I +i\vec{v}_n\cdot\vec{\sigma}\big)
  =
  \sum_{k=0}^{N}
  x^{N-k}
  \sum_{\vec{n}\in\mathcal{I}_{k}^N}
  \frac 12\,
  \text{\rm Tr}\big(
  (\vec{v}_{n_1}\cdot\vec{\sigma})\cdots (\vec{v}_{n_k}\cdot\vec{\sigma})
  \big),
 \]
 where we still use the convention about the empty sum. For $\vec{a}=(\cos\varphi,\sin\varphi,0)$ and 
 $\vec{b}=(\cos\psi,\sin\psi,0)$, we have 
 \[
  (\vec{a}\cdot\vec{\sigma})
  (\vec{b}\cdot\vec{\sigma})
  =
  \begin{pmatrix}
   0& e^{-i\varphi}
   \\
   e^{i\varphi}& 0
  \end{pmatrix}
  \begin{pmatrix}
   0& e^{-i\psi}
   \\
   e^{i\psi}& 0
  \end{pmatrix}
  =
  D\big(e^{i(\varphi-\psi)}\big),
 \]
 where $D(z)$ means the diagonal matrix with $\overline{d}_{11}=d_{22}=z$. 
 Applying this with $\vec{a}=\vec{v}_{n_{2r-1}}$ and $\vec{b}=\vec{v}_{n_{2r}}$, for $1\le r\le k/2$, we conclude that 
 \[
  (\vec{v}_{n_1}\cdot\vec{\sigma})\cdots (\vec{v}_{n_k}\cdot\vec{\sigma})
  =
  D\Big(\exp\big(i\sum_{r=1}^k(-1)^r\varphi_{n_r}\big)\Big),
 \]
 for $k$ even, and hence, half of its trace is the cosine in the statement. On the other hand, it is traceless for $k$ odd, because it is the product of $D\big(\exp\big(i\sum_{r=1}^{k-1}(-1)^r\varphi_{n_r}\big)\big)$ by $\vec{v}_{n_k}\cdot\vec{\sigma}$ that has zero diagonal. 
 Renaming $k$ as $2k$, the proof is complete.
\end{proof}

Now, we are ready to prove Theorem~\ref{t:rotation} assuming Proposition~\ref{p:Tk0}.

\begin{proof}[Proof of Theorem~\ref{t:rotation}]
 Let 
 $\vec{v}_n=\big(\cos\theta_{q-1-n}, \sin\theta_{q-1-n}, 0\big)$. Thanks to \eqref{spinors} and recalling that $e^{i\theta_{q-1-n}}$ represents the vector $\vec{v}_{n}$, the formula in the statement is equivalent to
 \[
  \prod_{\substack{n=0 \\ 4\nmid q-2n}}^{q-1}
  S\big(\vec{v}_n,\rho\big)
  =
  \pm S(\vec{v}, 2\pi/M),
 \]
 for some unit vector $\vec{v}\in\mathbb{R}^3$.
 
 Note that the matrices $S(\vec{v}, \alpha)$ with $\alpha=2\pi/M$ are characterized by the property $\frac 12 \,\text{\rm Tr}\,S(\vec{v}, \alpha)=\cos\frac{\pi}{M}$, because the entries in its diagonal are $\cos\frac{\alpha}{2}\pm i v_3\sin\frac{\alpha}{2}$. Hence, we have to prove
 \[
  \Bigg|
  \frac 12\, \text{\rm Tr}\, 
  \prod_{\substack{n=0 \\ 4\nmid q-2n}}^{q-1}
  \Big(
  \cos
  \frac{\rho}{2}
  +i\vec{v}_n\cdot\vec{\sigma}
  \sin
  \frac{\rho}{2}
  \Big)
  \Bigg|
  =
  \cos\frac{\pi}{M},
 \]
 if and only if \eqref{e:cosr2} holds. 
 Take $N=q$, $\varphi_n=\theta_{q-1-n}$ and $x=\cot\frac{\rho}{2}$
 in Lemma~\ref{l:trace}. By Proposition~\ref{p:Tk0}, the sum vanishes except for $k=0$, and the previous equality for $q$ odd  reads $\cos^q\frac{\rho}{2}=\cos\frac{\pi}{M}$, which is \eqref{e:cosr2}.
 Note that $4\nmid q-2n_j$ translates into $4\nmid 2n_j+2-q$ under $n_j\mapsto q-1-n_j$ (the map which passes from $\varphi_n$ in Lemma~\ref{l:trace} to $\theta_n$ in $\mathcal{T}_k$), and it still does not impose an actual condition for $q$ odd. 
 
 The same argument works for $q$ even restricting the set of vectors $\vec{v}_n$ to those with $n$ satisfying the congruence condition. Since there are $q/2$ of them (the number of $n$'s with $4\nmid q-2n$), the result is $\cos^{q/2}\frac{\rho}{2}=\cos\frac{\pi}{M}$, the second part of \eqref{e:cosr2}.
\end{proof}

\section{A quadratic exponential sum}\label{s:quad}

One could find formulas for $\theta_n$ using the evaluation of the generalized Gauss sums, but it leads to quite a number of cases and arithmetic subtleties related to quadratic residues. We avoid these complications thanks to the following result that is enough to deduce that the phases in $\mathcal{T}_k$ can be substituted by a quadratic polynomial. The proof essentially reduces to completing squares in the generalized Gauss sums.
\medskip

\begin{lemma}\label{l:th2quad}
 Let $\theta_n$ be, as before, the argument of $G(-p,n,q)$, and 
 $\delta = \frac{1-(-1)^q}{2}\in \{0,1\}$ the parity of $q$. For each $p$ and $q$ fixed, there exists $a\in\Z$ coprime with $q$, and $b\in\mathbb{R}$, such that
 \[
  \frac{2\pi a}{q}
  \Big(\frac{n}{2-\delta}\Big)^2
  +b-\theta_n
 \]
 is an integer multiple of $2\pi$, for any $0\le n<q$ with $4\nmid 2n+2-q$.
\end{lemma}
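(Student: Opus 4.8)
The plan is to compute the argument $\theta_n$ directly from the definition of $G(-p,n,q)$ by completing the square in the quadratic phase $-pk^2+nk$ modulo $q$. In the favorable situation one writes $G(-p,n,q)$ as an $n$-dependent root of unity times a Gauss sum that does not depend on $n$; taking arguments then yields $\theta_n$ as a quadratic polynomial in $n$ plus a constant, which is exactly the shape demanded by the statement. I would organize the argument according to the parity of $q$ and, when $q$ is even, according to the common parity of the admissible $n$.

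For $q$ odd, $4p$ is invertible modulo $q$; writing $\overline{4p}$ for its inverse and taking $c$ with $2pc\equiv n\pmod q$, completing the square gives $-pk^2+nk\equiv -p(k-c)^2+\overline{4p}\,n^2\pmod q$. Since $k\mapsto k-c$ permutes the residues modulo $q$, the remaining sum is $G(-p,0,q)$, independent of $n$. Hence $\theta_n\equiv \tfrac{2\pi \overline{4p}}{q}\,n^2+\arg G(-p,0,q)\pmod{2\pi}$, which is the assertion with $\delta=1$, $a=\overline{4p}$ (coprime to $q$) and $b=\arg G(-p,0,q)$.

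For $q$ even the admissible $n$ all share the parity of $q/2$. When they are even (the case $q\equiv 0\pmod 4$) the same computation works: $p$ is invertible modulo $q$, and writing $n=2m$ and $c\equiv\overline p\,m$ gives $-pk^2+nk\equiv -p(k-c)^2+\overline p\,m^2\pmod q$, whence $\theta_n\equiv \tfrac{2\pi\overline p}{q}(n/2)^2+\arg G(-p,0,q)$, i.e.\ the assertion with $a=\overline p$. The genuine difficulty is the case $q\equiv 2\pmod 4$, where the admissible $n$ are odd: the congruence $2pc\equiv n\pmod q$ is then unsolvable (even $\equiv$ odd), so the square cannot be completed by an integer shift. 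I would circumvent this by splitting the defining sum according to the parity of $k$. Setting $q'=q/2$ (odd), the terms with $k=2l$ collapse to $G(-2p,n,q')$, while the terms with $k=2l+1$, after isolating the factor coming from $(n-p)/2\in\Z$, collapse to $\exp(\pi i (n-p)/q')\,G(-2p,n-2p,q')$. Completing the square in the odd modulus $q'$ (where $8p$ is invertible) in each piece, a short computation shows that the two contributions carry the identical phase $\exp(2\pi i\,\overline{8p}\,n^2/q')$ — the $n$-dependent pieces of the odd-$k$ shift cancel precisely because $n$ is odd — so that $\theta_n\equiv \tfrac{2\pi\overline{8p}}{q'}\,n^2+\arg G(-2p,0,q')\pmod{2\pi}$.

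It then remains to reconcile this with the target $\tfrac{2\pi a}{q}(n/2)^2+b=\tfrac{2\pi a}{8q'}n^2+b$. Here I would use that $n^2\equiv 1\pmod 8$ for every odd $n$: the discrepancy between the two quadratic phases is $2\pi n^2(8\overline{8p}-a)/(8q')$, and since $n^2$ is constant modulo $8$ along the admissible $n$, this is constant modulo $2\pi$ as soon as $a\equiv 8\overline{8p}\equiv\overline p\pmod{q'}$; the constant is absorbed into $b$. Because $q'$ is odd, exactly one of $\overline p,\ \overline p+q'$ is odd, so $a$ may be chosen odd, hence coprime to $q=2q'$. I expect the main obstacle to be exactly this $q\equiv 2\pmod 4$ case: establishing the exact cancellation of the two parity contributions, and then checking that the leftover mod-$8$ mismatch is genuinely independent of $n$, so that it can be hidden in $b$ while keeping $a$ coprime to $q$.
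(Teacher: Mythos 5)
Your proof is correct. For $q$ odd it coincides with the paper's argument (complete the square, pull out the quadratic phase, take $a=\overline{4p}$), and your treatment of $4\mid q$ is also the natural specialization of what the paper does; the genuine divergence is in the case $q\equiv 2\pmod 4$, which you correctly identify as the crux. The paper never splits into the subcases $4\mid q$ and $q\equiv 2\pmod 4$: instead of insisting on a full completion of the square, it completes it only up to a leftover \emph{linear} term, writing for $q$ even, with $\epsilon\in\{0,1\}$ the common parity of the admissible $n$,
\[
 -pk^2+nk
 \equiv
 -p\Big(k-\overline{p}\tfrac{n-\epsilon}{2}\Big)^2
 +\epsilon\Big(k-\overline{p}\tfrac{n-\epsilon}{2}\Big)
 -\epsilon\tfrac{\overline{p}}{4}
 +\tfrac{\overline{p}}{4}n^2
 \pmod{q},
\]
so that after the shift $k\mapsto k+\overline{p}(n-\epsilon)/2$ one gets $G(-p,n,q)=\exp\big(2\pi i\,\overline{p}(n^2-\epsilon)/(4q)\big)\,G(-p,\epsilon,q)$; since $\epsilon$ is constant over the admissible $n$, the factor $G(-p,\epsilon,q)\exp\big(-\pi i\epsilon\overline{p}/(2q)\big)$ is an $n$-independent constant, and $a=\overline{p}$, $b$ its argument, settle both even subcases at once. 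Your route for $q\equiv 2\pmod 4$ --- splitting $k$ by parity, reducing to the odd modulus $q'=q/2$, checking that the two halves carry identical phases (your cancellation is right: the discrepancy is $-\pi(n-p)$, a multiple of $2\pi$ because $n-p$ is even), and then the mod-$8$ reconciliation together with choosing the odd representative of $\overline{p}\pmod{q'}$ to keep $a$ coprime to $q$ --- is sound, and as a by-product yields the cleaner identity $G(-p,n,q)=2\exp\big(2\pi i\,\overline{8p}\,n^2/q'\big)G(-2p,0,q')$, i.e.\ the classical reduction of an even-modulus Gauss sum to an odd one. What the paper's trick buys is brevity and uniformity: by tolerating the linear term $\epsilon k$ in the reference Gauss sum, it avoids the parity splitting, the cancellation check, and the mod-$8$/coprimality bookkeeping entirely; what your version buys is a more explicit evaluation and an argument that stays within standard manipulations of Gauss sums with odd modulus.
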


\begin{proof}
 Along this proof, we adopt the common notation in elementary number theory consisting in indicating the inverse modulo $q$ with a bar; for instance, $\overline{p}p\equiv 1\pmod{q}$.
 Completing squares, if $q$ is odd, for $k\in\mathbb{Z}$,
 \[
  -pk^2+nk
  \equiv
  -p(k-\overline{2p}n)^2 +\overline{4p}n^2
  \pmod{q}.
 \]
 If $q$ is even, $\overline{2}$ does not exist. We use instead the following formula, with 
 $\epsilon = \frac{1-(-1)^n}{2}\in\{0,1\}$ being the parity of $n$:
 \[
  -pk^2+nk
  \equiv
  -p\Big(k-\overline{p}\frac{n-\epsilon}{2}\Big)^2 
  +\epsilon\Big(k-\overline{p}\frac{n-\epsilon}{2}\Big)
  -\epsilon\frac{\overline{p}}{4}
  +\frac{\overline{p}}{4}n^2
  \pmod{q}.
 \]
 Note that $4\nmid 2n+2-q$ is equivalent to $2\mid n-q/2$ for $q$ even; hence, $\epsilon$ is constant for all valid values of $n$. In fact, $1-\epsilon$ is the parity of $q/2$.
 
 Choosing $a=\overline{4p}$ for $q$ odd and $a=\overline{p}$ for $q$ even, the previous formulas show that $G(-p,n,q)\exp\big({-2}\pi i\frac aq \big(\frac{n}{2-\delta}\big)^2\big)$
 equals
 $G(-p,0,q)$, for $q$ odd
 and it equals
 $G(-p,\epsilon,q)\exp\big({-\pi} i\frac{\epsilon a}{2q}\big)$ for $q$ even.
 These quantities do not depend on $n$, and the result follows after taking $b$ as their argument. 
\end{proof}

Consider now the quadratic exponential sum
\[
 \mathcal{E}_k
 =
\sum_{\substack{\vec{n}\in \mathcal{I}_{2k}^q \\ 4\nmid 2n_j+2-q}}
   \exp
   \Big(
   \frac{2\pi i a}{(2-\delta)^2 q}
   Q_k(\vec{n})
   \Big),
   \qquad\text{with}\quad
   Q_k(\vec{n})=n_1^2-n_2^2+\dots -n_{2k}^2.
\]
An immediate consequence of Lemma~\ref{l:th2quad} is given by the following corollary.
\medskip

\begin{corollary}\label{c:realE}
 For any $0<2k\le q$, we have $\mathcal{T}_k=\Re \mathcal{E}_k$.
\end{corollary}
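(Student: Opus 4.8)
The plan is to substitute the phase formula from Lemma~\ref{l:th2quad} directly into the definition of $\mathcal{T}_k$ and observe that the constant $b$ drops out because of the alternating sign pattern. Concretely, Lemma~\ref{l:th2quad} guarantees that, for every index $n$ satisfying $4\nmid 2n+2-q$, one has the congruence
\[
 \theta_n
 \equiv
 \frac{2\pi a}{q}\Big(\frac{n}{2-\delta}\Big)^2 + b
 \pmod{2\pi}.
\]
Since each tuple $\vec{n}\in\mathcal{I}_{2k}^q$ appearing in $\mathcal{T}_k$ is constrained so that every component $n_j$ meets the divisibility condition $4\nmid 2n_j+2-q$, I can apply this congruence to each of the $2k$ phases $\theta_{n_1},\theta_{n_2},\dots,\theta_{n_{2k}}$ simultaneously.

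The key structural point is that the alternating sum $\theta_{n_1}-\theta_{n_2}+\cdots-\theta_{n_{2k}}$ has exactly $k$ plus signs and $k$ minus signs. Therefore the constant term $b$, which is independent of $n$, contributes $b-b+b-\cdots-b=0$, and only the quadratic parts survive. Collecting them yields
\[
 \theta_{n_1}-\theta_{n_2}+\cdots-\theta_{n_{2k}}
 \equiv
 \frac{2\pi a}{(2-\delta)^2 q}\,
 \big(n_1^2 - n_2^2 + \cdots - n_{2k}^2\big)
 =
 \frac{2\pi a}{(2-\delta)^2 q}\, Q_k(\vec{n})
 \pmod{2\pi},
\]
where the common factor $1/(2-\delta)^2$ from the squares matches exactly the normalization in the definition of $\mathcal{E}_k$.

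From here the conclusion is immediate: since cosine is $2\pi$-periodic, the congruence above implies
\[
 \cos\big(\theta_{n_1}-\theta_{n_2}+\cdots-\theta_{n_{2k}}\big)
 =
 \cos\Big(\frac{2\pi a}{(2-\delta)^2 q}\, Q_k(\vec{n})\Big)
 =
 \Re\,\exp\Big(\frac{2\pi i a}{(2-\delta)^2 q}\, Q_k(\vec{n})\Big).
\]
Summing this identity over the common index set $\{\vec{n}\in\mathcal{I}_{2k}^q : 4\nmid 2n_j+2-q\}$, and using that $\Re$ commutes with finite sums, gives $\mathcal{T}_k=\Re\,\mathcal{E}_k$. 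There is no genuine obstacle in this argument; the only points requiring a moment's care are verifying that the divisibility constraint on the tuple lets Lemma~\ref{l:th2quad} apply to \emph{every} component, and checking the cancellation of $b$, both of which are built into the matching definitions of $\mathcal{T}_k$ and $\mathcal{E}_k$.
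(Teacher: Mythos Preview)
Your argument is correct and is exactly the ``immediate consequence'' the paper alludes to without spelling out: substitute the phase formula from Lemma~\ref{l:th2quad} into the alternating sum so that the $n$-independent constant $b$ cancels, leaving precisely $\frac{2\pi a}{(2-\delta)^2 q}Q_k(\vec{n})$ modulo $2\pi$, and then take real parts term by term.
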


\section{The vanishing of the real part}\label{s:vrp}

Bearing in mind Corollary~\ref{c:realE} and our considerations in \S\ref{s:trig}, the main result follows via Proposition~\ref{p:Tk0}, thanks to the following proposition.
\medskip

\begin{proposition}
 The real part of $\mathcal{E}_k$ is zero, for any $0<2k\le q$ and any $a$ coprime with $q$.
\end{proposition}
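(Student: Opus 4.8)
The plan is to recast the whole proposition as a single matrix identity and then extract it from the arithmetic of the Gauss sequence $c_n:=\omega^{n^2}$, where $\omega:=\exp\!\big(2\pi i a/((2-\delta)^2q)\big)$. With this notation the summand of $\mathcal E_k$ is exactly $\mathcal M(\vec n):=\prod_{j}c_{n_j}^{(-1)^{j+1}}=c_{n_1}\overline{c_{n_2}}c_{n_3}\cdots\overline{c_{n_{2k}}}$, so if I set $K_n=\bigl(\begin{smallmatrix}0&\overline{c_n}\\ c_n&0\end{smallmatrix}\bigr)$ and let $\mathcal A$ be the admissible index set (all of $\{0,\dots,q-1\}$ for $q$ odd, and the fixed–parity progression cut out by $4\nmid 2n+2-q$ for $q$ even), then $\sum_{k}w^{2k}\mathcal E_k$ is the $(2,2)$ entry of $P(w):=\prod_{n\in\mathcal A}(I+wK_n)$, the product taken in increasing order of $n$; this is just the path expansion behind Lemma~\ref{l:trace}. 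Since $|c_n|=1$ one has $K_n^2=I$ and $\overline{K_n}=\sigma_1K_n\sigma_1$, and the latter yields $\overline{P(w)}=\sigma_1P(w)\sigma_1$ for real $w$, so the $(1,1)$ and $(2,2)$ entries of $P$ are complex conjugates and $\tfrac12\operatorname{Tr}P(w)=\sum_k w^{2k}\Re\mathcal E_k$. Hence the Proposition reduces to the single polynomial identity
\[
\operatorname{Tr}\ \prod_{n\in\mathcal A}\big(I+wK_n\big)\ \equiv\ 2 ,
\]
whose $w^{2k}$–coefficient is precisely $2\,\Re\mathcal E_k$.

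Before attacking this I would record why no cheap argument can work: a sign–reversing involution $\iota$ on $\mathcal I_{2k}^q$ with $\omega^{Q_k(\iota\vec n)}=-\omega^{-Q_k(\vec n)}$ would force $a\bigl(Q_k(\iota\vec n)+Q_k(\vec n)\bigr)\equiv (2-\delta)^2q/2 \pmod{(2-\delta)^2q}$, which is impossible for $q$ odd on parity grounds. The natural reflections $n\mapsto q-n$ and $n\mapsto q-1-n$ (using $c_{q-n}=c_n$, resp.\ $c_{q-1-n}=c_{n+1}$) do give bijections of $\mathcal A$, but they send $\mathcal M(\vec n)$ to $\overline{\mathcal M(\vec n)}$, i.e.\ they only show that $\mathcal E_k$ is \emph{real} modulo the ``boundary'' tuples meeting the index $0$. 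So the vanishing of the real part is a genuinely global phenomenon and must come from cancellation in a Gauss sum; the case $k=1$ is already exactly the statement $\big|\sum_{n\in\mathcal A}c_n\big|^2=\#\mathcal A$ (for $q$ odd this is $|G(a,0,q)|^2=q$).

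To reach the identity for all $k$ I would use $K_n=D_n\sigma_1D_n^{-1}$ with $D_n=\operatorname{diag}(\overline{c_n}^{1/2},c_n^{1/2})$ to telescope the product into a closed ``transfer'' trace $\operatorname{Tr}\prod_{n}(I+w\sigma_1)R_n$, where the diagonal twists $R_n=D_n^{-1}D_{n+1}$ (cyclically) carry the \emph{linear} phases $\omega^{-(2n+1)/2}$. Diagonalizing $I+w\sigma_1$ in the $\sigma_1$–eigenbasis turns this into a one–dimensional transfer product whose evaluation is governed by how the linear phases accumulate quadratically around the loop, i.e.\ by the quadratic Gauss sum $\sum_{n}\omega^{n^2}$ together with its companion autocorrelations $\sum_{n}c_n\overline{c_{n+h}}=\#\mathcal A\cdot[\,h\equiv0\,]$. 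The ``flatness'' encoded in these perfect autocorrelations is what should force every higher coefficient $\Re\mathcal E_k$ to collapse, exactly as $|S|^2=\#\mathcal A$ kills the $k=1$ term.

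The main obstacle is precisely this closed evaluation of the transfer trace for $k\ge 2$: unlike the $k=1$ case it is not a single second moment but requires tracking both the magnitude and the accumulated loop phase, where a sign $\omega^{-q^2/2}=(-1)^{a}$ enters (reflecting the $\pm$ ambiguity of the double cover in \eqref{spinors}). I expect the cleanest route is an induction that peels off one factor at a time, shortening the length-$\#\mathcal A$ loop while preserving the autocorrelation structure, with the Gauss–sum magnitude as base case. Finally, the even-$q$ case demands a parallel but more delicate analysis: the modulus is $4q$, the indices are confined to a length-$q/2$ progression of fixed parity, and Lemma~\ref{l:th2quad} contributes the extra constant phase $\exp(-\pi i\epsilon a/2q)$; one must check that the autocorrelation still collapses and that the Gauss–sum evaluation, now splitting into quadratic–residue cases, again yields the constant $2$. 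This is where the more involved arithmetic is concentrated.
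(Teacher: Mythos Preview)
Your reformulation as $\operatorname{Tr}\prod_{n\in\mathcal A}(I+wK_n)\equiv 2$ is correct, but the proof is not complete: for $k\ge 2$ you only offer a hope (``I expect the cleanest route is an induction\ldots'') and explicitly flag the closed evaluation of the transfer trace as an unresolved obstacle. So as it stands there is a genuine gap.

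More importantly, you have talked yourself out of the right idea. You dismiss ``cheap arguments'' after ruling out a sign-reversing involution and the reflections $n\mapsto q-n$, $n\mapsto q-1-n$, but you never try \emph{translations}. The paper's proof is a two-line averaging over the $\Z/q\Z$-action $f_h:\vec n\mapsto\text{(cyclic reorder of }\vec n+h\vec 1\bmod q)$ on $\mathcal I_{2k}^q$. The key observation you are missing is that a cyclic permutation of the coordinates changes $Q_k$ only by a global sign, so $\Re\exp\bigl(\tfrac{2\pi i a}{q}Q_k(f_h(\vec n))\bigr)=\Re\exp\bigl(\tfrac{2\pi i a}{q}Q_k(\vec n+h\vec 1)\bigr)$; summing over $h$ and expanding $Q_k(\vec n+h\vec 1)=Q_k(\vec n)+2hL_k(\vec n)$ with $L_k(\vec x)=\sum_j(-1)^jx_j$ yields a geometric series in $h$ that vanishes because $0<L_k(\vec n)<q$ for every $\vec n\in\mathcal I_{2k}^q$. (For $q$ even one first substitutes $n_j\mapsto 2n_j+\epsilon$ to land in $\mathcal I_{2k}^{q/2}$ and runs the same argument modulo $q'=q/2$.) This handles all $k$ at once, requires no Gauss-sum evaluation whatsoever, and in your matrix language corresponds to exploiting the cyclic invariance of the trace under $K_n\mapsto K_{n+h}$---a symmetry your transfer-matrix set-up obscures rather than reveals.
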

\medskip 

For the proof, it will be important to introduce a bijection $f_h:\II_k^N\longrightarrow\II_k^N$.
Given $h\in\Z$ and $\vec{n}\in\II_k^N$, the vector $\vec{n}+h\vec{1}$ modulo $N\in\N$, with $\vec{1}=(1,1,\dots, 1)\in\Z^k$, can be transformed into an element of $\II_k^N$ by a circular permutation of its coordinates; 
we take this as the definition of $f_h(\vec{n})$. 
For instance, if $N = 11$, $\vec{n} = (3, 5, 8, 9)\in\II_4^{11}$ and $h = 4$, then $\vec{n} + h\vec{1} = (7, 9, 12, 13)$, which is $(7, 9, 1, 2)$ modulo $11$; then, performing a circular permutation, it becomes $(1, 2, 7, 9)\in\II_4^{11}$. Therefore, in this example, $f_4(3, 5, 8, 9) = (1, 2, 7, 9)$. Note that $f_h$ is a bijection on $\II_k^N$, because it can be inverted by taking $\vec{n}-h\vec{1}$, i.e., $f_h^{-1}\equiv f_{-h}$.

\begin{proof}
 We distinguish the cases with odd and even $q$. Both follow the same scheme, but there is a slight difference due to a $2$ factor that could ruin a divisibility condition. It is important to keep in mind the elementary formula for $q\in\N$ and $c\in\Z$:
 \begin{equation}\label{e:runity}
  \sum_{h=0}^{q-1}
  e^{2\pi i c/q}=0,
  \qquad\text{if and only if}\quad q\nmid c.
 \end{equation}

 Consider $q$ odd. As circular permutations of the variables of the quadratic form $Q_k$ only may affect to its sign, we have that $Q_k\big(f_h(\vec{n})\big)$ equals $\pm Q(\vec{n}+h\vec{1})$ modulo $q$. Hence, the real part of $\exp\big(2\pi i aQ_k(f_h(\vec{n}))/q\big)$ remains invariant when $f_h(\vec{n})$ is replaced by $\vec{n}+h\vec{1}$, and we have, averaging over $h$,
 \begin{equation}\label{e:expodd}
  q\Re \mathcal{E}_k
  =
  \sum_{\vec{n}\in\mathcal{I}_{2k}^q}
  \Re
  \sum_{h=0}^{q-1}
  \exp\Big(\frac {2\pi ia }q Q(\vec{n}+h\vec{1})\Big). 
 \end{equation}
 Expanding 
 $Q_k(\vec{n}+h\vec{1})$, we get $Q_k(\vec{n})+2hL_k(\vec{n})$,
 with 
 $L_k(\vec{x})=\sum_{k=1}^{2d}(-1)^kx_k$.
 For each $\vec{n}\in\mathcal{I}_{2k}^q$, grouping together consecutive coordinates, we have $0<L_k(\vec{n})<q$, in particular $q\nmid L(\vec{n})$, and the result is deduced from \eqref{e:runity}.
 
 For $q$ even, we are going to apply a variant of the previous argument modulo $q'=q/2$. 
 If $\epsilon\in\{0,1\}$ is the parity of $q'$, the condition $4\nmid 2n_j+2-q$ implies that $\epsilon$ is also the parity of each $n_j$, and a change of variables $n_j\mapsto 2n_j+\epsilon$ allows to write the exponential sum as 
 \[
  \mathcal{E}_k
  =
  \sum_{\vec{n}\in \smash{\mathcal{I}_{2k}^{q'}}}
   \exp
   \Big(
   \frac{\pi i a}{2 q}
   Q_k(2\vec{n}+\epsilon \vec{1})
   \Big)
  =
  \sum_{\vec{n}\in \smash{\mathcal{I}_{2k}^{q'}}}
   \exp
   \Big(
   \frac{2\pi i a}{q}
   \big(Q_k(\vec{n})+\epsilon L_k(\vec{n})\big)
   \Big).
 \]
 A crucial point is that, for $x$ modulo $q'$, the value of $P(x)=x^2+\epsilon x$ is well defined modulo $q$. In other words, $P(x+kq')-P(x)$ is divisible by $q$. Then, using the bijection $f_h$, as before, but this time modulo $q'$, after averaging over   $h$ as in \eqref{e:expodd}, it follows that
 \[
  q'\Re \mathcal{E}_k
  =
  \sum_{\vec{n}\in \smash{\mathcal{I}_{2k}^{q'}}}
  \Re
  \sum_{h=0}^{q'-1}
   \exp
   \Big(
   \frac{2\pi i a}{q}
   \big(Q_k(\vec{n})+\epsilon L_k(\vec{n})+ 2hL_k(\vec{n})\big)
   \Big).
 \]
 A factor $\exp\big(2\pi i ahL_k(\vec{n})/q'\big)$ can be extracted from the last expression and, again, \eqref{e:runity} gives the result.
\end{proof}

\section*{Acknowledgments}
Fernando Chamizo is partially supported 
by the PID2020-113350GB-I00 grant of the MICIU (Spain) and by ``Severo Ochoa Programme for Centres of Excellence in R{\&}D'' (CEX2019-000904-S).
Francisco de la Hoz is partially supported by the research group grant IT1615-22 funded by the Basque Government, and by the project PID2021-126813NB-I00 funded by MICIU/AEI/10.13039/501100011033 and by ``ERDF A way of making Europe''.



\end{document}